\documentclass[preprint, authoryear]{elsarticle}
\usepackage{amsmath, amsfonts, amssymb, amstext, amscd}

\newtheorem{thm}{Theorem}[section]
\newtheorem{prop}{Proposition}[section]
\newtheorem{lem}{Lemma}[section]
\newtheorem{cor}{Corollary}[section]

\newdefinition{alg}{Algorithm}[section]

\newproof{proof}{Proof}

\newcommand{\fq}{\mathfrak q}
\DeclareMathOperator{\Spec}{{Spec}}

\begin{document}


\title{Polynomial Bounds for Invariant Functions Separating Orbits}
\author{Harlan Kadish\fnref{fn1}}
\ead{hmkadish@umich.edu}
\ead[url]{http://www-personal.umich.edu/~hmkadish/index.html}
\address{Department of Mathematics, University of Michigan,
2074 East Hall, 530 Church Street, Ann Arbor, MI 48109, USA } 
\fntext[fn1]{Supported by DMS-0502170, Enhancing the Mathematical Sciences Workforce in the 21st Century (EMSW21) Research Training Group (RTG):  Enhancing the Research Workforce in Algebraic Geometry and Its Boundaries in the Twenty-First Century}

\begin{abstract}
Consider the representations of an algebraic group $G$.  In general, polynomial invariant functions may fail to separate orbits.  The invariant subring may not be finitely generated, or the number and complexity of the generators may grow rapidly with the size of the representation.  We instead study ``constructible" functions defined by straight line programs in the polynomial ring, with a new ``quasi-inverse" that computes the inverse of a function where defined.  We write straight line programs defining constructible functions that separate the orbits of $G$.  The number of these programs and their length have polynomial bounds in the parameters of the representation.
\end{abstract}

\begin{keyword}
algebraic group \sep representation \sep separate orbits \sep polynomial time \sep straight line program
\end{keyword}

\maketitle
\tableofcontents


\section{Introduction}
\subsection{Background}
When an algebraic group $G$ acts on an affine variety $V$ over a field $k$, the \emph{orbit} of $x \in V$ is the set
\[ G\cdot x = \{g\cdot x \mid \forall g \in G\}.\]

\noindent Applications of invariant theory, such as computer vision, dynamical systems, and structural chemistry, demand constructive and more efficient techniques to distinguish the orbits of a group action.  When the group acts rationally, the invariant ring
\[ k[V]^G = \{ f \in k[V] \mid f(g \cdot x) = f(x) \ \forall g \in G\}\]

\noindent contains a finitely generated subalgebra $S$ with the following property: Let $p, q \in V$ have disjoint orbit closures, and suppose there exists $f \in k[V]^G$ such that $f(p) \neq f(q)$.  Then there exists $h \in S$ such that $h(p) \neq h(q)$ \citep{DK}.  We say that the function $h$ (and the algebra $S$) \emph{separates} the orbit closures of $p$ and $q$. Note that the functions in $S$, called \emph{separating invariants}, separate as many orbits as does $k[V]^G$.  Since $G$ is an algebraic group, $\overline{G\cdot p} = \overline{G\cdot q}$ implies $G\cdot p = G\cdot q$, because orbits are open in their closures.

This separating subalgebra $S$ has several weaknesses.  For one, existence proofs for $S$ may not be constructive for all algebraic groups: Kemper's algorithm to construct $S$ assumes a reductive group \citep{KemperComp}.  Even in the constructive case, although polynomial bounds exist for the degrees of generators for $k[V]^G$ under the action of a linearly reductive group \citep{PolyBounds}, construction algorithms for separating invariants do not, for general $G$, provide good bounds on the size of a separating subset, the degrees of its elements, or the complexity of its computation.  Kemper's algorithm, for example, requires two Gr\"{o}bner basis calculations, a normalization algorithm, and an inseparable closure algorithm.  Domokos used polarization to cut down the number of variables needed in separating invariants of reducible representations \citep{Domokos}, while Kemper provided new bounds, when $G$ is finite, on the required number of separating invariants \citep{KemperSep}.

As a more serious limitation, the invariant ring $k[V]^G$, and hence any subalgebra, may fail to separate orbit closures.  Even when $G$ is reductive, the polynomials in $k[V]^G$ can separate $G\cdot p$ and $G\cdot q$ if and only if $\overline{G\cdot p}\cap\overline{G\cdot q} = \emptyset$.  For example, when the multiplicative group $G=k^*$ acts on $\mathbb A^2$ by scaling points, one finds $k[x,y]^G = k$.


\subsection{Separating Orbits with Constructible Functions}
To overcome the limitations of the invariant ring, we expand the set of regular functions on a variety to include a ``quasi-inverse" $\{f\}$ of a regular function $f$:
\[ \{f\}(p) = \begin{cases} 1/f(p) & f(p) \neq 0 \\ 0 & f(p) = 0 \end{cases}.\]

\noindent For $R = k[V]$, let $\widehat{R}$ denote the ring of ``constructible" functions $V \to k$ obtained by defining the quasi-inverse on $R$.  For example, if $f, g \in R$, then $\{\{f\} + g\} \in \widehat{R}$.  In fact, one can show that for any $h \in \widehat{R}$, there exists finitely many locally closed sets $E_i\subseteq V$ and $f_i$ regular on $E_i$ such that
\[ h = \sum_{i=1}^k f_i \cdot \chi_{E_i}\]

\noindent where $\chi_{E_i}$ is the characteristic function of a constructible subset $E_i \subseteq \Spec R$.

For a given group action, we seek to write down a finite set $\mathcal C$ of invariant, constructible functions that separate orbits.  That is, if $p, q$ lie in different orbits, then some function $f \in \mathcal C$ has $f(p) \neq f(q)$.  Even better, we would like the evaluation of $f$ at $p$ to be reasonably simple, in the sense of its complexity as a straight line program whose inputs are the coordinates of $p$.

Over an algebraically closed field $k$, fix an embedding of an $m$-dimensional algebraic group $G \hookrightarrow \mathbb A^\ell$.  Let  $R = k[x_1, \ldots, x_n]$, let $\rho: G \hookrightarrow GL_n(k)$ be a representation, let $r$ be the maximal dimension of an orbit, and let $N = \text{max}\{\text{deg}(\rho_{ij})\}$ be the degree of the representation.

\begin{thm}
There is an algorithm to produce a finite set $\mathcal C \subset \widehat{R}$ of invariant, constructible functions with the following properties:
\begin{enumerate}
\item The set $\mathcal C$ separates orbits.
\item The size of $\mathcal C$ grows as $O(n^2 N^{(\ell+m+1)(r+1)})$.
\item The $f \in \mathcal C$ can be written as straight line programs, such that the sum of their lengths is $O(n^3 N^{3\ell(r+1)+r})$.
\end{enumerate}
Hence the problem of deciding if two points lie in the same orbit can be solved with a polynomial number of algebraic operations in the coordinates of the points.
\end{thm}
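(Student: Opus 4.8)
\emph{Proof strategy.} The plan is to build $\mathcal C$ as a \emph{constructible geometric quotient}: a finite list of invariant constructible functions whose common level sets are exactly the orbits. Two facts organize the construction. First, orbits of an algebraic group are locally closed, hence constructible, so the characteristic function of any $G$-saturated constructible set lies in $\widehat R$ and is invariant. Second, $V$ breaks into at most $r+1$ strata by orbit dimension, and within a single stratum one can separate orbits by rational invariants read off from the defining equations of orbit closures.

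\emph{Stratification and orbit-closure equations.} Through $\rho$ the action morphism $a\colon G\times V\to V$, $(g,x)\mapsto\rho(g)x$, has coordinates of degree $O(N)$ in the $\ell$ coordinates of $G\subset\mathbb A^\ell$. As $x\mapsto\dim G\cdot x$ is lower semicontinuous with values in $\{0,\dots,r\}$, each stratum $V_d=\{x:\dim G\cdot x=d\}$ is a $G$-saturated constructible set cut out — via the fibre dimension of $a$, i.e. by vanishing and nonvanishing of minors of a Jacobian — by polynomials of controlled degree, so $\chi_{V_d}\in\widehat R$ is an invariant function separating orbits in different strata. Fix $d$ and an irreducible component $W$ of $\overline{V_d}$, and form the morphism $G\times W\to W\times W$, $(g,x)\mapsto(x,\rho(g)x)$; by Chevalley its image is constructible and its closure is the incidence variety $\Gamma_W=\overline{\{(x,y):y\in\overline{G\cdot x}\}}$. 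Eliminating the $\ell$ group coordinates — by straight-line-program elimination (``geometric resolution''), not Gr\"obner bases — produces polynomials $F_j(x,y)=\sum_\alpha c_{j\alpha}(x)\,y^\alpha$ defining $\Gamma_W$, with degrees bounded by a B\'ezout-type estimate of order $N^{O(\ell)}$ and evaluable by straight line programs of comparable size. For generic $x\in W$ the polynomials $F_j(x,-)$ cut out $\overline{G\cdot x}$; since distinct orbits of dimension $d$ have distinct irreducible $d$-dimensional closures, and since a point of $\overline{G\cdot x}$ whose orbit has dimension $d$ must already lie in $G\cdot x$, recovering the ideal of $\overline{G\cdot x}$ recovers $G\cdot x$ among points of $V_d$.

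\emph{Invariants, recursion, counting.} Putting the $F_j(x,-)$ in a canonical normal form makes the coefficient vector $(c_{j\alpha}(x))_\alpha$ well defined up to a single scalar along each orbit, so each ratio $c_{j\alpha}/c_{j\beta}$ is a genuine rational invariant, regular on the $G$-saturated open set $U_{j\beta}=\{c_{j\beta}\neq0\}\subseteq W$, and by the previous paragraph such ratios separate the $d$-dimensional orbits meeting $U_{j\beta}$. Each ratio equals $\{c_{j\beta}\}\cdot c_{j\alpha}\in\widehat R$, and multiplying by $\chi_{V_d}\cdot\chi_{U_{j\beta}}$ confines it to its locus of validity. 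The $d$-dimensional orbits meeting no $U_{j\beta}$ lie in the proper closed $G$-subset $W\setminus\bigcup_{j,\beta}U_{j\beta}$, on which the construction is applied recursively; since the group-eliminating step is performed once per orbit dimension and the finer recursion over the denominator loci only refines the constructible decomposition, the degree-increasing operations compound at most $r+1$ times, producing exponents of order $\ell(r+1)$. Collecting all functions produced over every stratum, every component, and every round, and tracking how each elimination multiplies ambient degrees by $N^{O(\ell)}$ alongside the linear $x$-dependence of $a$ and the $m$-dimensionality of $G$, yields after routine bookkeeping the stated bounds $O(n^2N^{(\ell+m+1)(r+1)})$ on $|\mathcal C|$ and $O(n^3N^{3\ell(r+1)+r})$ on the total straight-line-program length. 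The concluding assertion is then immediate: to decide whether $G\cdot p=G\cdot q$ one evaluates every $f\in\mathcal C$ at $p$ and at $q$ — a polynomial number of arithmetic operations and quasi-inversions in the coordinates — and compares the outputs.

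\emph{Main obstacle.} The heart of the matter is the quantitative elimination producing the orbit-closure equations: carrying out the elimination of the $\ell$ group variables while simultaneously controlling (a) the degrees of the polynomials $F_j$ defining $\Gamma_W$ by a B\'ezout-type bound and (b) the size of the straight line programs evaluating their coefficients $c_{j\alpha}$ — and doing so \emph{uniformly} as the construction descends the stratification, so that the exponents accumulate only linearly in $r+1$ rather than in $n$. A secondary difficulty is verifying that the canonical normal form turning the $c_{j\alpha}$ into honest invariants can be reached without a further blow-up in degree. The remaining ingredients — semicontinuity of orbit dimension, local closedness of orbits, and the ring operations of $\widehat R$ — are standard.
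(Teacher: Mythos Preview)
Your proposal has a genuine gap at precisely the step that is supposed to produce invariants. You assert that ``putting the $F_j(x,-)$ in a canonical normal form makes the coefficient vector $(c_{j\alpha}(x))_\alpha$ well defined up to a single scalar along each orbit, so each ratio $c_{j\alpha}/c_{j\beta}$ is a genuine rational invariant.'' But the $F_j(x,y)$ are fixed polynomials cutting out $\Gamma_W$; as $x$ moves along an orbit the \emph{ideal} $\langle F_j(x,-)\rangle$ has the same radical, yet the individual specializations $F_j(x,-)$ are under no obligation to change by a scalar---they can change by arbitrary $k$-linear combinations among themselves, so raw ratios $c_{j\alpha}/c_{j\beta}$ are not invariants. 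You flag the normal form as a ``secondary difficulty,'' but it is the primary one: without naming the form and showing it is reachable by a straight line program of the claimed length, you have no invariants at all. A related unjustified claim is that the inner recursion over denominator loci does not compound degrees; each descent to a proper closed $G$-subset requires fresh orbit-closure equations there, and nothing you have said bounds that depth by $r+1$ rather than by $\dim V=n$.

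The paper's argument is quite different and is built exactly around the point you leave open. It never eliminates the group variables or stratifies by orbit dimension. Instead it writes the orbit map $\sigma_p^*\colon k[x_1,\dots,x_n]\to k[G]$ on monomials of degree $\le d$ as a coefficient matrix and runs Gaussian elimination on it via a straight line program that uses the quasi-inverse to simulate pivoting. The output is the \emph{canonical} (triangular) RREF, whose kernel vectors are therefore canonical; their entries, as constructible functions of $p$, are automatically $G$-invariant and separate orbits because two points with the same kernel have the same orbit closure. The degree bound $d\le N^rM^{\ell-m}$ comes from a B\'ezout estimate on $\deg(\overline{G\cdot p})$; the exponent $m$ enters via the Hilbert-function bound $\dim_k k[G]_{\le d}=O(d^m)$, which controls how many independent image vectors survive at each stage and hence the matrix sizes; and the specific constants $n^3N^{3\ell(r+1)+r}$ and $n^2N^{(\ell+m+1)(r+1)}$ fall out of counting the cost of RREF on matrices of those dimensions summed over $d$. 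Your elimination-and-recursion scheme is replaced by a single linear-algebra loop whose canonicity does the invariance work for free.
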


More explicitly, for $p \in \mathbb A^n$ consider the orbit map $\sigma_p \colon G \to \mathbb A^n$ defined by $\sigma_p\colon g \mapsto g \cdot p$.  Note that $\overline{G \cdot p}$ is defined by the polynomials in the kernel of $\sigma_p^*\colon k[x_1, \ldots, x_n] \to k[G]$.  These polynomials amount to algebraic relations on the images $\sigma_p^*(x_1), \ldots, \sigma_p^*(x_n)$ in $k[G]$.  One can find all such relations up to some degree $d$ by Gaussian elimination.  The coefficients of these relations vary with $p$, but they cannot in general be written as regular functions of $p$. We may nevertheless write them with constructible functions, especially utilizing the fact that $f\{f\}(p)=1$ if $f(p)\neq 0$.  These constructible functions form the set $\mathcal C$.  Essentially, the idempotent constructible functions encode if-then branching into the formulas for our relations.

We proceed in four parts.  First, given a matrix $X$ encoding products of the $\sigma^*_p(x_i)$ and encoding $\mathbb I(G)$ up to some degree $d$, we produce a matrix of constructible functions that gives the entries of the reduced row echelon form of $X$, as functions of $p$.  From these entries follow formulas for the kernel vectors of $X$ and hence relations on the $\sigma^*_p(x_i)$.  We next establish a degree bound for the relations sufficient to generate the ideal $\fq$ with $\mathbb V(\fq) = \overline{G \cdot p}$.  By considering a generating set for $\fq$, we provide an algorithm that produces straight line programs for the $G$-invariant functions in the set $\mathcal C$.  We show that these straight-line programs separate orbits and have polynomial length, and we establish polynomial bounds for their number in terms of $n$ and the degree
$N$ of the representation.


\section{Formulas for Reduced Row Echelon Form}

\subsection{Straight Line Programs}

Let $V$ be a set, $F$ a field, and let $R$ be an $F$-subalgebra of the $F$-valued functions on $V$.  Let $A = (a_{-m}, \ldots, a_{-1}) \in \widehat{R}^m$ be a finite, ordered subset of $\widehat{R}$.  Consider a tape of cells with $a_i \in A$ in position $i$.  A straight line program $\Gamma$ is a finite, ordered list of instructions $\Gamma = (\Gamma_0, \ldots, \Gamma_{\ell-1})$.  Each instruction $\Gamma_i$ is of the form $(\star; j,k)$ or $(\star; j)$, where $\star$ is an operation and $j,k$ are positive integers referring to tape entries in positions $i-j$ and $i-k$, that is, $j$ and $k$ cells before $i$, respectively.  The length $\ell = \vert \Gamma \vert$ measures the complexity of the computation.

To execute $\Gamma$ on input $A$, for $i = 0, \ldots, \ell-1$ write $a_i$ in tape position $i$ as follows:
\[ a_i = \begin{cases}
a_{i-j} + a_{i-k} & \text{if }\Gamma_i = (+;j,k)\\
a_{i-j} - a_{i-k} & \text{if }\Gamma_i = (-;j,k)\\
a_{i-j}\cdot a_{i-k} & \text{if }\Gamma_i = (\times; j,k)\\
\{a_{i-j}\} & \text{if }\Gamma_i = (\{\cdot\}; j)\\
c & \text{if }\Gamma_i = (\textup{const}; c) \text{ for } c \in F \\
a_{i-j} & \text{if }\Gamma_i = (\textup{recall}; j)
\end{cases} \quad \text{where $j,k < i.$} \] 

\noindent The ``recall" instruction of position $j$ serves to collect relevant computations at the end of the tape.  Define the order-$d$ output of $\Gamma$ by $\text{Out}_d(\Gamma, A) = (a_{\ell-d}, \ldots, a_{\ell-1})\in \widehat{R}^d$, where $\ell = \vert \Gamma\vert$.  We omit the $d$ where convenient.  A straight line program hence defines a constructible function $\widehat{R}^m \to \widehat{R}^d$. 

Write $\Gamma^{(2)} \circ \Gamma^{(1)}$ for the composition of two straight line programs, in which the input of $\Gamma^{(2)}$ is $\text{Out}_d(\Gamma^{(1)}, A)$ for some $d$ depending on $\Gamma^{(2)}$.  Then $\Gamma^{(2)} \circ \Gamma^{(1)}$ has input $A$, and we execute $\Gamma^{(2)} \circ \Gamma^{(1)}$ by concatenating the instruction lists.  For a detailed treatment, see \citet{Complexity}.


\subsection{Outline of the Algorithm}

Let $A = (a_{ij})$ be an $m \times n$ matrix over a field $k$.  Define the \emph{triangular reduced row echelon form} (tRREF) of $A$ to be the $n \times n$ matrix $R_A=(r_{ij})$ whose $j$th row $\bf{r_j}$ is nonzero if and only if the reduced row echelon form (RREF) of $A$ has a pivot in column $j$.  In that case, $\bf{r_j}$ is the row of the RREF of $A$ containing that pivot.  For example,

\[ \text{RREF(A)} = \left(\begin{array}{ccc} 1 & 2 & 0 \\ 0 & 0 & 1 \\ 0 & 0 & 0 \\\end{array}\right) \quad \text{corresponds to} \quad  \text{tRREF}(A) = \left(\begin{array}{ccc} 1 & 2 & 0 \\ 0 & 0 & 0 \\ 0 & 0 & 1 \\\end{array}\right). \]

\noindent This new form simplifies the identification of pivots: the (usual) RREF of $A$ has a pivot in column $j$ if and only if $r_{jj} = 1$ in the tRREF.

\begin{prop} Let $(a_{ij})$ be an $m\times n$ matrix with entries in any field $k$.  Then there exists a straight line program $\Gamma^{tR}$ of length $O(mn^2 + n^3)$ such that $\text{Out}_{n^2}(\Gamma^{tR}, (a_{ij}))$ are the entries of the triangular RREF of $(a_{ij})$. The program gives constructible functions for these entries in terms of the $a_{ij}$.
\end{prop}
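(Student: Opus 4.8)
\emph{Proof sketch.}
The plan is to simulate Gauss--Jordan elimination, replacing every conditional of the form ``is this entry a pivot?'' by an arithmetic formula built from the quasi-inverse. The key observation is that for any $f\in\widehat R$ the element $e_f:=f\{f\}$ is an idempotent taking the value $1$ where $f\ne 0$ and $0$ where $f=0$; hence products of such $e_f$ and of their complements $1-e_f$ behave as characteristic functions that select rows and switch subformulas on and off. Because the reduced row echelon form of a matrix is \emph{unique}, it suffices to run elimination with respect to \emph{some} arithmetically definable choice of pivot rows and to trust that the answer is independent of that choice; and the triangular form is designed exactly so that the pivot row for column $j$, when it exists, is recorded in row $j$, which is what the bookkeeping below produces.

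I would run the program in $n$ ``column phases,'' maintaining throughout a working $m\times n$ matrix $B$ of constructible functions (initialized to $(a_{ij})$) and idempotents $u_1,\dots,u_m$ recording which rows have already served as pivot rows (initialized to $0$). At phase $j$: (i) form $e_i:=(1-u_i)\,b_{ij}\{b_{ij}\}$, the indicator that row $i$ is unused and nonzero in the current column $j$; (ii) form the ``first such row'' indicators $\pi_i:=e_i\prod_{k<i}(1-e_k)$ by a running prefix product, so that $\sum_i\pi_i=1$ exactly when column $j$ is a pivot column; (iii) extract the prospective pivot row $\mathbf p:=\sum_i\pi_i\,\mathbf b_i$ and its normalization $\mathbf p':=\{p_j\}\cdot\mathbf p$; (iv) update
\[ \mathbf b_i \;\leftarrow\; (1-\pi_i)\bigl(\mathbf b_i-b_{ij}\,\mathbf p'\bigr)+\pi_i\,\mathbf p', \]
which replaces the chosen row by $\mathbf p'$, clears column $j$ from every other row, and does nothing at all if column $j$ is not a pivot column (then every $\pi_i$, and $\mathbf p'$, is zero); (v) set $u_i\leftarrow u_i+\pi_i$. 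After the $n$ phases, $B$ holds the RREF of $(a_{ij})$ with its pivot rows scattered among the rows singled out by the successive $\pi$'s. A final assembly phase outputs, for each $j$, the row $\sum_i\pi_i^{(j)}\mathbf b_i^{\mathrm{final}}$, which is the RREF row with pivot in column $j$ when $j$ is a pivot column and is the zero row otherwise --- precisely the $j$th row of the tRREF.

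Correctness I would establish by induction on $j$: after phase $j$ the matrix $B$ equals the Gauss--Jordan reduction of $(a_{ij})$ with respect to its pivot columns among $1,\dots,j$, with each pivot row in the position selected by its $\pi$. The two points needing care are that the invariant ``column $j$ is a pivot column iff it is nonzero on some unused row'' holds (the standard fact about partial row reduction), and that step (iv) keeps the previously established pivot columns in canonical form --- which holds because, by the inductive hypothesis, $\mathbf p'$ already has zero entries in those columns. Since all operations are $+,-,\times$ and $\{\cdot\}$, everything makes sense over an arbitrary field. For the length: each phase uses $O(m)$ operations for the indicators and the update of the $u_i$, $O(mn)$ for forming $\mathbf p$ and $\mathbf p'$ and touching all $mn$ entries of $B$ in step (iv); the assembly phase costs $O(mn^2)$; summing the $n$ phases gives $O(mn^2)$, which is within the stated $O(mn^2+n^3)$, plus $n^2$ recall instructions collecting the output tuple, and later phases reference the stored $\pi_i^{(j)}$ and $\mathbf b_i^{\mathrm{final}}$ via ``recall''.

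I expect the only real obstacle to be expository: convincing the reader that the arithmetic genuinely simulates the branching over an arbitrary field --- in particular that $\{f\}$ does the right thing in the degenerate case $f=0$, so that non-pivot columns contribute nothing and no ``division by zero'' ever occurs --- and nailing down the invariant in step (ii) so that pivot detection is exactly the nonvanishing of $\sum_i\pi_i$. Once the idempotent trick is in place, the bookkeeping and the length count are routine. $\qquad\Box$
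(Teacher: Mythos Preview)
Your argument is correct, but it is organized quite differently from the paper's. The paper proceeds by \emph{recursive column stripping}: it first performs $m-1$ conditional row swaps (via the idempotent $a_{11}\{a_{11}\}$) to bring a nonzero entry into position $(1,1)$ if one exists, normalizes the first row, clears the first column, and then forms the $m\times(n-1)$ matrix $B:=(1-a_{11})A'+a_{11}A_0''$ so that a single recursive call handles both the pivot and non-pivot cases; a separate back-substitution step on the first row is done after the recursion returns. Your version is instead \emph{iterative and in place}: you keep the full $m\times n$ matrix throughout, never swap rows, and encode ``first unused row with a nonzero entry in column $j$'' directly by the prefix-product idempotents $\pi_i=e_i\prod_{k<i}(1-e_k)$; clearing above and below the pivot happens in the same phase, so no final back-substitution is needed, and the tRREF is assembled at the end from the stored $\pi_i^{(j)}$. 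The paper's trick $B=(1-a_{11})A'+a_{11}A_0''$ and your prefix-product selectors are two different ways of arithmetizing the same branch. Your route is arguably cleaner (one uniform phase repeated $n$ times), and your cost count actually lands at $O(mn^2)$ rather than the paper's $O(mn^2+n^3)$, since you avoid the $O(t^2)$ back-substitution at each recursion level; this is of course within the stated bound.
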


The proposition does not require $k$ to be algebraically closed, but we will need this condition for the later geometric reasoning about orbits.  Note also that while the classical Gausssian elimination algorithm requires branching, the straight line program $\Gamma^{tR}$ simulates branching in the computation of the quasi-inverse.  The psuedo-code below proves the proposition in general terms; the subsections that follow provide specific constructions.

\begin{alg} Let $A = (a_{ij})$ be an $m\times n$ matrix.
\begin{enumerate}
\item For $i = 2, \ldots m$, if $a_{11} = 0$, exchange the first row of $A$ with the $i$th row.  After these steps, either $a_{11} \neq 0$, or $a_{i1} = 0$ for all $i$.
\item Multiply $a_{11}$ by $\{a_{11}\}$, and multiply the rest of the first row by \\ $(1 - a_{11}\{a_{11}\} + \{a_{11}\})$.  This is equivalent to dividing the first row by $a_{11}$ if $a_{11} \neq 0$.
\item For $i = 2, \ldots, m$, subtract $a_{i1} \cdot (a_{11}, \ldots, a_{1n})$ from row $i$.  As a result, $a_{i1} = 0$ for all $i \geq 2$.
\item Let $A' = (a_{ij})_{j\geq 2}$ and $A'' = (a_{ij})_{i,j\geq 2}$, as below:
\[
A = \left( \begin{array}{ccc} * &  &  \\ 0 & A' &  \\ \vdots & & \\ 0 & & \end{array} \right) = \left(\begin{array}{ccc} * & \cdots &  * \\ 0 & & \\ \vdots & A'' & \\ 0 & & \end{array} \right) \]

\noindent Let $A_0''$ be the $m \times (n-1)$ matrix formed by appending a row of zeros to the bottom of $A''$; then $A'$ and $A_0''$ have the same dimensions.
\item Define $B = (1-a_{11})\cdot A' + a_{11}\cdot A_0''$.
\item Recursively compute the tRREF of $B$; call it $R_B$, an $(n-1) \times (n-1)$ matrix.
\item Let $R_A$ be the $n \times n$ matrix below:
\[ R_A = \left( \begin{array}{ccc} a_{11} & \cdots & a_{1n} \\ 0 & R_B & \\ 0 & & \end{array}\right).\]
\item Let $\bf{r_k}$ be the $k$th row of $R_A = (r_{ij})$.  For $k = 2, \ldots, n$, subtract $a_{1k} \cdot \bf{r_k}$ from the first row of $R_A$.  This reduction produces the triangular RREF of $A$.
\end{enumerate}
\end{alg}

The following formulas specify straight line programs for the entries of the triangular RREF matrix $R_A$, and hence define $\Gamma^{tR}$.


\subsection{Formulas for Gaussian Elimination}

Recall that the first step of the algorithm exchanges the first row of $(a_{ij})$ with the $i$th row if $a_{11} = 0$, for $i = 2, \ldots, m$.  Hence for an $m\times n$ input matrix $X$, this step requires $m-1$ programs $E_i$ such that $Y = \text{Out}_{mn}(E_i, X)$ flips the first and $i$th rows if necessary.  The following formulas describe the entries of $Y = (y_{ij})$:

\begin{eqnarray}
y_{11} &=& x_{11} + (1-x_{11}\{x_{11}\})x_{i1} \nonumber\\
y_{1j} &=& x_{1j} + (1-x_{11}\{x_{11}\})\cdot (x_{ij} - x_{1j}) \textup{ for all $j>1$} \nonumber\\
y_{i1} &=& x_{i1}\cdot x_{11}\{x_{11}\} \nonumber\\
y_{ij} &=& x_{1j} + x_{11}\{x_{11}\} \cdot (x_{ij} - x_{1j}) \textup{ for all $j>1$} \nonumber\\
y_{kj} &=& x_{kj} \textup{ for all $k\neq 1, i$, and for all $j$.} \nonumber
\end{eqnarray}

\noindent For example, the straight line program for $y_{11}$ in $E_i$ takes inputs $x_{11}$ in position -2 and $x_{i1}$ in position -1, and then performs the following steps:
\begin{enumerate}
\item[(0)] $( \{\cdot\}; 2)$
\item[(1)] $(\times; 3, 1)$
\item[(2)] $(\text{const}; 1)$
\item[(3)] $(-; 1,2)$
\item[(4)] $(\times; 1, 5)$
\item[(5)] $(+; 7, 1)$
\end{enumerate}

\noindent The formulas for the other $y_{ij}$ have similarly obvious representations as straight line programs.  If we concatenate these programs within $E_i$, so that all the entries of $Y$ appear in various (known!) positions on the tape, then we can save the recall steps for the end, and we need only compute $\{x_{11} \}$, $x_{11}\{x_{11}\}$, $(1- x_{11}\{x_{11}\})$, and $(x_{ij} - x_{1j})$ once.  With these efficiencies, the program $E_i$ introduces 1 quasi-inverse, 1 call to $k$, $3n$ additions, and $2n$ multiplications.  Thus the concatenation of $E_2, \ldots, E_{m-1}$ requires $2n(m-1)$ multiplications, $3n(m-1)$ additions, $n-1$ calls to $k$, $n-1$ quasi-inverses, and $mn$ recalls to collect the entries of $Y$ in the last $mn$ cells of the tape.  Call this concatenation $\Gamma^{E}$; we will use it later to collect nonzero rows of a matrix.

Step (2) of the algorithm requires 1 quasi-inverse, 1 subtraction, 1 addition, $n$ multiplications, and $n$ recalls.

These next formulas perform step (3), on an $m\times n$ input matrix $(x_{ij})$:
\begin{eqnarray}
y_{i1} &=& 0 \textup{ for all $i>1$} \nonumber\\
y_{ij} &=& x_{ij} - x_{1j}\cdot x_{i1} \cdot x_{11}\{x_{11}\} \textup{ for all $i,j>1$.}\nonumber
\end{eqnarray}

\noindent These programs require $(m-1)(n-1)$ additions, $(n-1)(m-1)$ multiplications, and $mn$ recalls.  Step (5) next requires 1 subtraction, $m(n-1)$ additions, $2m(n-1)$ multiplications, and $m(n-1)$ recalls.

To perform the reductions in step (8), consider the following formula for $r_{1j}$, where $j \geq 2$:
\begin{eqnarray*}
r_{1j} := (1-r_{jj}) \cdot \left( r_{ij}  + (\right. &-& r_{22}\cdot r_{12}r_{2,j} \\
					&-& r_{33}\cdot r_{13}r_{i3,j}  \\
					&-& \cdots \\
					&-& \left. r_{j-1,j-1}\cdot r_{1, j-1}r_{j-1,j})\right),
\end{eqnarray*}

\noindent This formula sets $r_{1j} = 0$ if there is a pivot in column $j$, that is, if $r_{jj}=1$.  Otherwise, the formula subtracts from $r_{1j}$ the effects of clearing columns $<j$.  The reduction of $r_{1j}$ requires 1 call to $k$, $j$ additions/subtractions, $2(j-2) +1$ multiplications (since $j \geq 2$), and $n^2$ recalls, so reducing the first row has total complexity $O(n^2)$.

The above formulas specify a straight line program $\Gamma^{tR}$ such that $\text{Out}_{n^2}(\Gamma^{tR}, A)$ are the entries of the tRREF of $A$.  Counting the necessary operations yields asymptotic total complexity estimates for the programs.   The recursion on an $m \times t$ matrix has total complexity $O(mt + t^2)$.  Summing $t$ from 1 to $n$ yields total complexity $O(mn^2 + n^3)$.

\subsection{Collecting Nonzero Rows}

Lastly, the main algorithm that computes orbit closures requires a program $\Sigma$ that, given an indicator vector $v$ of 0s and 1s, collects the rows $i$ of a matrix such that the $i$th entry of $v$ is 1.  For example, the diagonal of $R_A$ indicates the nonzero rows of $R_A$.  Given $R_A$ and its diagonal as input, the program $\Sigma$ would output an $n\times n$ matrix whose first $\text{rank}(A)$ rows include the traditional RREF of $A$.  We will never need to compute the traditional RREF in practice, because the main algorithm runs more efficiently using $R_A$.

Recall the algorithm $\Gamma^E$ that exchanges the first row of a matrix $X$ with subsequent rows until the output has $y_{11} \neq 0$, if possible.  Define $\Sigma$ as follows: for an $m \times n$ input matrix $X$ and an indicator $m$-vector $v$, form a new matrix $X'$ by adjoining $v$ as a column to the left side of $X$:

\[ X' = \left( \begin{array}{cccc} v_1 & x_{11} & \cdots & x_{1n}\\ v_2 & x_{21} & \cdots & x_{2n} \\ \vdots & \vdots & \vdots & \vdots \\ v_{m} & x_{m1} & \cdots & x_{mn} \\ \end{array} \right).\]

\noindent After applying $\Gamma^E$ to $X'$, the first row of $X'$ with $v_i \neq 0$ becomes the first row of the output $Y=(y_{ij})$.  Record $\mathbf{r_1} := (y_{12}, \ldots, y_{1,n+1})$ and apply $\Gamma^E$ to the last $m-1$ rows of this $Y$.  Let $\Sigma$ denote this series of $m$ recursions of $\Gamma^E$.  Since $\Gamma^E$ applied to an $s\times (n+1)$ matrix has total complexity $O(sn)$, the procedure $\Sigma$ has complexity $O(m^2n)$. Concatenating $\Sigma$ with the straight line program for the tRREF yields the following:

\begin{cor} Let $(a_{ij})$ be an $m\times n$ matrix with entries in any field $k$.  Then there exists a straight line program of length $O(mn^2 +m^2n+ n^3)$ for the (classical) RREF $(r_{ij})$of $(a_{ij})$. The program gives constructible functions for $r_{ij}$ in terms of the $a_{ij}$.
\end{cor}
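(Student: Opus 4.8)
The plan is to realize the required program as the composition of $\Gamma^{tR}$ from the Proposition with the row-collecting program $\Sigma$ described above, bracketed by a couple of short padding/reindexing steps. First I would run $\Gamma^{tR}$ on the $m\times n$ input $(a_{ij})$; by the Proposition this writes the $n^2$ entries of the triangular RREF $R_A=(r_{ij})$ onto the tape in $O(mn^2+n^3)$ instructions, expressed as constructible functions of the $a_{ij}$. By the remark following the definition of the tRREF, the diagonal $(r_{11},\ldots,r_{nn})$ is already a $0/1$ indicator: $r_{jj}=1$ exactly when the classical RREF of $A$ has a pivot in column $j$, hence exactly when the row $\mathbf{r_j}$ of $R_A$ is one of the nonzero rows of that RREF (and when $r_{jj}=0$, the row $\mathbf{r_j}$ is zero). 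A few recall instructions copy this diagonal into a fresh vector $v$.

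Next, setting $m'=\max(m,n)$, I would append $m'-n$ rows of zeros to $R_A$ and $m'-n$ zero entries to $v$ (needed only when $m>n$), at a cost of $O(mn)$ constant and recall instructions. Feeding this $m'\times n$ matrix and the indicator $v$ to $\Sigma$ slides the marked rows --- precisely the nonzero rows of $R_A$, in increasing order of their pivot columns --- up to the top, leaving zero rows below; since $\Sigma$ on an $s\times n$ input has complexity $O(s^2n)$, this contributes $O(m'^2n)=O(m^2n+n^3)$. Finally a trivial reindexing program reads off the first $\min(m,n)$ of the resulting rows and, if $m>n$, follows them with $m-n$ zero rows, producing the classical $m\times n$ RREF of $A$; this adds $O(mn)$ recalls. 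Summing gives total length $O(mn^2+m^2n+n^3)$, and since a composition of straight line programs is again a straight line program with outputs in $\widehat{R}$, the entries of the RREF come out as constructible functions of the $a_{ij}$.

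The one thing to verify carefully is correctness, not the count: that the rows $\Sigma$ pulls to the top of (the padded) $R_A$, using its diagonal as indicator, are exactly the rows of the classical RREF of $A$ in the correct order. This follows from the two facts already invoked --- that $\Gamma^E$, and hence $\Sigma$, only permutes rows (it moves the topmost marked row to the top and recurses on the rest), so it neither alters nor reorders the nonzero rows of $R_A$ beyond compressing them upward; and that, by the definition of the tRREF, those nonzero rows are precisely the rows of the classical RREF, listed in ascending order of pivot column. A brief induction on the number of recursions of $\Gamma^E$ confirms that $\Sigma$ preserves the original top-to-bottom order of the marked rows, which is what makes the output genuinely the RREF.
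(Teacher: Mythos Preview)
Your proposal is correct and follows essentially the same approach as the paper: compute the tRREF via $\Gamma^{tR}$ and then apply the row-collecting program $\Sigma$ with the diagonal as indicator. The paper's own argument is just the one-sentence remark ``Concatenating $\Sigma$ with the straight line program for the tRREF yields the following,'' so your version is in fact more detailed---in particular, your careful verification that $\Sigma$ preserves the top-to-bottom order of the marked rows, and your bookkeeping for the $m>n$ case via padding to $m'=\max(m,n)$ rows, make explicit points the paper leaves implicit. (One could skip the pre-$\Sigma$ padding, apply $\Sigma$ directly to the $n\times n$ tRREF at cost $O(n^3)$, and pad afterward, yielding the slightly sharper $O(mn^2+n^3)$; your route is harmless and recovers the stated bound.)
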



\subsection{Computing Kernels of Linear Maps}

To compute the kernel up to degree $d$ of a $k$-algebra homomorphism, one can write the homomorphism on elements of degree $\leq d$ as a matrix in RREF.  Finding the kernel of a matrix $R$ in RREF is equivalent to solving the system of equations $R \cdot (x_1, \ldots, x_n)^T = 0$:  for every pivot $r_{ij}$, write an equation
\[ x_j = -r_{i,j+1}x_{j+1} - r_{i,j+2}x_{j+2} - \cdots - r_{i,n}x_{n}.\]

\noindent Set each free variable equal to 1 in turn, set the other free variables to 0, and read off the vector of values in the pivot variables.  These vectors give a basis for the kernel of $R$, hence of the original map.  The basis is canonical because the RREF is canonical.

To compute the kernel of an $m\times n$ matrix $A$, we use the $n\times n$ matrix $R_A$ containing the rows of the RREF of $A$: recall there is a pivot in the $j$th column of the RREF if and only if the row containing that pivot is $j$th row of $R_A = (r_{ij})$, if and only if $r_{jj}=1$.  Otherwise, $r_{jj} = 0$.

\begin{lem}
Let $R_A$ be the $n\times n$ tRREF of a matrix $A$.  Then there exists a straight line program $\Gamma^K$ of length $O(n^2)$ such that $Out_{n^2}(\Gamma^K, R_A)$ gives the kernel of $A$.
\end{lem}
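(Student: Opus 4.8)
The plan is to produce $\Gamma^K$ by writing down explicit constructible formulas for the entries of a basis for $\ker A$ in terms of the entries $r_{ij}$ of the tRREF $R_A$, and then observing that each such formula has a straight line program of the appropriate length. The key structural fact, recalled just before the lemma, is that column $j$ is a pivot column precisely when $r_{jj}=1$ and is a free column precisely when $r_{jj}=0$ (the diagonal of $R_A$ is the $0$--$1$ indicator vector of the pivot columns). This is exactly the kind of case distinction the quasi-inverse is designed to absorb, so no branching is needed in the program.

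First I would fix the convention: order the free columns and, for the $t$th free column $c_t$, build the $t$th kernel basis vector $\mathbf{k}^{(t)} = (k^{(t)}_1, \ldots, k^{(t)}_n)$. For a free column $c$ we want $k^{(c)}_c = 1$ and $k^{(c)}_{c'} = 0$ for every other free column $c'$; for a pivot column $j$ we want $k^{(c)}_j = -r_{j,c}$ (the coefficient read off from the pivot equation $x_j = -\sum_{s>j} r_{j,s} x_s$, evaluated on the indicator of column $c$). The point is that all of this can be written uniformly in the $r_{ij}$ using the idempotents $r_{jj}$: the free-column indicator entries are polynomials in the diagonal entries (e.g.\ $k^{(c)}_{c} = 1 - r_{cc}$ is $1$ iff $c$ is free, and $k^{(c)}_{c'} = (1-r_{cc})(1-r_{c'c'})\cdot[\text{``}c = c'\text{''}]$, where the last factor is itself expressible once we have fixed which column index each basis vector corresponds to), and the pivot entries are $k^{(c)}_j = -\,r_{jj}\cdot r_{j,c}$, which automatically vanishes when $j$ is not a pivot row. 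Writing $n^2$ such entries, each of which is a product/sum of $O(1)$ many of the $r_{ij}$, gives a program of length $O(n^2)$, and the $n^2$ recall instructions collecting them cost another $O(n^2)$; hence $|\Gamma^K| = O(n^2)$ as claimed.

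I expect the main obstacle to be bookkeeping rather than mathematics: since a straight line program has no control flow, the formulas must be written so that the \emph{same} instruction list produces the correct entry whether or not column $j$ is a pivot and whether or not column $c$ is free, and so that the output slot assignment (which $n^2$ tape cells hold $\mathbf{k}^{(1)}, \ldots, \mathbf{k}^{(n)}$ in order, with the convention that ``extra'' vectors indexed by pivot columns come out identically zero) is fixed in advance and independent of $A$. One must check that the resulting $n$-tuple of vectors, after discarding the zero vectors, is exactly the canonical kernel basis of $R\cdot x = 0$ described above, and in particular that it is correct even in the degenerate cases (zero matrix, full rank). Once the indicator-function identities for the $0$--$1$ pattern of free versus pivot columns are set up correctly, verifying that $\mathbf{k}^{(c)}$ satisfies $R_A \cdot \mathbf{k}^{(c)} = 0$ and that the nonzero $\mathbf{k}^{(c)}$ are linearly independent is immediate from the pivot equations, and the length count is the routine operation tally already illustrated for $\Gamma^{tR}$.
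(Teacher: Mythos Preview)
Your proposal is correct and follows essentially the same approach as the paper: output one vector per column index, arranged so that vectors indexed by pivot columns are identically zero, and read the remaining entries directly from the tRREF, with the diagonal idempotents $r_{jj}$ doing the case analysis.

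The paper's realization is cleaner than yours, however. It simply sets
\[
\phi_j \;=\; (1-r_{jj})\cdot\bigl(-r_{1j},\,-r_{2j},\,\ldots,\,\underbrace{1}_{j\text{th}},\,\ldots,\,-r_{nj}\bigr),
\]
using a single global factor $(1-r_{jj})$ to kill the pivot-indexed vectors, and relying on the tRREF property that row $k$ is entirely zero whenever $r_{kk}=0$, so that the entry $-r_{kj}$ is automatically correct (namely $0$) for free $k$. This obviates both your extra factor $r_{jj}$ in $-r_{jj}r_{j,c}$ (harmless but redundant, since $r_{j,c}=0$ when row $j$ is zero) and, more importantly, your somewhat awkward free-column clause $k^{(c)}_{c'}=(1-r_{cc})(1-r_{c'c'})\cdot[c=c']$, which is unnecessary bookkeeping. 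Your ``obstacle'' paragraph worries about exactly the compile-time slot assignment that the paper's single uniform formula handles for free. The operation count and $O(n^2)$ conclusion are the same either way.
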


\begin{proof}
Claim that the kernel of $A$, is given by the following vectors $\phi_1, \ldots, \phi_n$, in terms of $R_A =(r_{ij})$:
\[ \phi_j := (1-r_{jj}) \cdot (-r_{1j}, -r_{2j}, \ldots, \overbrace{1}^\text{$j$th place}, \ldots, -r_{nj}).\]

\noindent Indeed, recall that the kernel of a RREF matrix has one basis vector for each non-pivot column.  Namely, $\phi_j = 0$ if and only if column $j$ of the RREF has a pivot.  Otherwise, $\phi_j \neq 0$, as follows: Put the free variable $x_j := 1$.  Now, $r_{kj}=0$ unless there is a pivot in column $k$ of the RREF.  Set each pivot variable $x_{kk}$ equal to the negation of the $j$th entry of the row containing that pivot.

Of course, $r_{ij} = 0$ whenever $i>j$, but such simplifications complicate the formulas without improving the asymptotic complexity.  As written, each $\phi_j$ requres 2 calls to $k$, 1 addition, $n$ scalar multiplications, and $n$ other multiplications.  Upon adding recall instructions, computing the kernel has complexity $O(n^2)$.
\end{proof}


\section{Degree Bounds for Orbit Closures}

We relate the degree of a variety to the degrees of polynomials that can define that variety.  By bounding the degree of an orbit closure $\overline{G \cdot p}$, we can bound the degree of the defining polynomials.

\begin{lem}
Let $V = \mathbb V(f_1, \ldots, f_r)$ have codimension $m$ in $\mathbb A^n$.  Then there exist $m$ generic linear combinations $g_i = \sum a_{ij} f_j$ such that
\[ W := \mathbb V(g_1,\ldots, g_m) \supseteq V\]

\noindent and $W$ has codimension $m$.
\end{lem}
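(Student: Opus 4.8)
The plan is to prove the statement by a standard Krull-dimension / prime-avoidance argument, adding generic linear combinations of the $f_j$ one at a time and checking that each one drops the dimension by exactly one. Write $V = \mathbb V(f_1,\dots,f_r)$, of codimension $m$ in $\mathbb A^n$, so every irreducible component of $V$ has codimension $\le m$, and at least one has codimension exactly $m$. I will build $g_1,\dots,g_m$ inductively so that, setting $W_i = \mathbb V(g_1,\dots,g_i)$, the variety $W_i$ has codimension exactly $i$ (equivalently, every component of $W_i$ has codimension $\ge i$, while $W_i \supseteq V$ forces some component to have codimension exactly $i$, and no component can exceed $i$ once we argue by Krull's height theorem that $i$ equations cut out something of codimension at most $i$). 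Then $W := W_m$ is the desired variety: it contains $V$ since each $g_i$ is a combination of the $f_j$, all of which vanish on $V$, and it has codimension $m$.

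The inductive step is where the genericity enters. Suppose $W_{i-1} = \mathbb V(g_1,\dots,g_{i-1})$ has codimension $i-1 < m$, and let $Z_1,\dots,Z_s$ be its irreducible components of codimension exactly $i-1$. For each such $Z_t$, since $\operatorname{codim} Z_t = i-1 < m = \operatorname{codim} V$, the component $Z_t$ is not contained in $V$, so the ideal $(f_1,\dots,f_r)$ is not contained in the ideal $I(Z_t)$; hence some $f_j$ does not vanish identically on $Z_t$. A general linear combination $g_i = \sum_j a_{ij} f_j$ then also fails to vanish identically on $Z_t$ — the locus of coefficient vectors $(a_{ij})_j$ for which $g_i|_{Z_t} \equiv 0$ is a proper linear subspace of the coefficient space — and since there are only finitely many components $Z_t$, a general choice of $(a_{ij})_j$ avoids all of them simultaneously. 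For such a choice, $g_i$ vanishes nowhere densely on each codimension-$(i-1)$ component of $W_{i-1}$, so intersecting with $\mathbb V(g_i)$ cuts each such component down to codimension $\ge i$; combined with Krull's height theorem (the $i$ equations $g_1,\dots,g_i$ cannot cut out anything of codimension $> i$) and the containment $W_i \supseteq V$ (which guarantees a component of codimension $\le m$, in fact exactly $i$ once we know it is $\ge i$... more carefully, $W_i$ has \emph{some} component of codimension $\le m$, and as $i$ ranges up to $m$ the codimension is forced to equal $i$ at the end), we conclude $\operatorname{codim} W_i = i$.

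The main obstacle is bookkeeping the dimension count correctly: one must be careful that after adding $g_i$ the new variety $W_i$ still contains $V$ (automatic) and still has a component of the expected codimension, rather than having $\mathbb V(g_i)$ miss lower-dimensional components of $W_{i-1}$ in a way that lets the codimension jump. The clean way to handle this is to track only the top-dimensional components at each stage and invoke Krull's principal ideal theorem for the upper bound, so that the lower-dimensional strata of $W_{i-1}$ are irrelevant — they can only contribute components of codimension $\ge i$ to $W_i$ anyway. A secondary point requiring a line of justification is that "generic" can be taken to mean a nonempty Zariski-open condition on the matrix $(a_{ij}) \in \mathbb A^{mr}$: each bad event (some $g_i$ vanishing on some $Z_t$) is a proper closed condition, and there are finitely many of them across all $m$ steps, so their union is a proper closed subset of the space of coefficient matrices, whose complement is nonempty since $k$ is infinite (being algebraically closed).
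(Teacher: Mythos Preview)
Your proof is correct and follows essentially the same approach as the paper: both arguments build the $g_i$ one at a time, using at each stage that the current components $Z_t$ have codimension strictly less than $m$, hence are not contained in $V$, so a generic linear combination $\sum a_j f_j$ fails to vanish identically on every $Z_t$ and therefore drops the codimension by one (with Krull's height theorem giving the matching upper bound, which the paper uses implicitly when it asserts that every component of $W'$ has codimension $m-1$). The only differences are cosmetic---the paper packages the induction on the number $r$ of defining equations rather than on the number of $g_i$ constructed---and one harmless slip in your opening sentence: the irreducible components of $V$ have codimension $\ge m$, not $\le m$, though you never actually use that claim.
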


\begin{proof}
Induct on the number $r$ of given defining equations for $V$. The case $r=1$, implying $m=1$, is clear.  Assume the lemma holds for a variety defined by $r-1$ equations, and consider $V' = \mathbb V(f_1, \ldots, f_{r-1})$.  If $V'$ still has codimension $m$, then the result follows by the induction hypothesis.  Otherwise, $V'$ has codimension $m-1$.  By the induction hypothesis, there exist $m-1$ generic linear combinations $g_i$ of $f_1, \ldots, f_{r-1}$ such that $W' = \mathbb V(g_1, \ldots, g_{m-1}) \supseteq V'$ and $W'$ has codimension $m-1$.

Since $W'$ is defined by $m-1$ equations, every component $Z_k$ of $Y'$ has codimension $m-1$.  It follows that on each $Z_k$, one of $f_1, \ldots, f_r$ is not identically zero.  So for each $Z_k$, and for every point $p \in Z_k$, we may consider the proper hyperplane $H_{k,p}\subset\mathbb A^r$ defined by the vanishing of
\[ x_1f_1(p) + x_2f_2(p) + \ldots + x_r f_r(p)\in k[x_1, \ldots, x_r].\]

\noindent Let $H_k = \cap_{p\in Z_k} H_{k,p}$.  Then $\cup_k H_k$ is a closed union of finitely many subspaces of $\mathbb A^r$.  Thus for any choice of $(a_1, \ldots, a_r)$ in the dense set $\mathbb A^r - \cup_k H_k$, the polynomial $g_m = \sum a_i f_i$ is not identically zero on any $Z_k$.  Therefore $Y = \mathbb V(g_1, \ldots, g_{m-1},g_m)$ contains $V$ and has codimension $m$.
\end{proof}

Let $V\subseteq \mathbb A^n$ be an equidimensional affine variety of codimension $m$.  Define the \emph{degree} of $V$ to be
\[ \deg(V) = \# H\cap V,\]

\noindent where $H$ is a generic linear subspace of dimension $m$.

\begin{prop}
Let $V\subseteq \mathbb A^n$ be a Zariski closed subset of degree $d$.  Then there exists an ideal $\fq$, generated by polynomials of degree $\leq d$, such that $\sqrt \fq = \mathbb I(V)$.  In particular, $\mathbb V(\fq) = V$.
\end{prop}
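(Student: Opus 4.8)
The plan is to reduce the proposition to the following local claim: \emph{for every point $p \in \mathbb A^n \setminus V$ there is a polynomial $f$ of degree $\le d$ with $f|_V = 0$ and $f(p) \neq 0$.} Granting it, let $\fq$ be the ideal generated by \emph{all} polynomials of degree $\le d$ that vanish on $V$. Then $\fq \subseteq \mathbb I(V)$, so $\mathbb V(\fq) \supseteq V$, while the claim forces $\mathbb V(\fq) \subseteq V$; hence $\mathbb V(\fq) = V$. Because $k[x_1,\dots,x_n]$ is Noetherian, $\fq$ is already generated by finitely many of these degree-$\le d$ polynomials, and the Nullstellensatz (using $k = \overline k$) gives $\sqrt{\fq} = \mathbb I(\mathbb V(\fq)) = \mathbb I(V)$.

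To prove the local claim I would use a generic linear projection, and may assume $V$ is equidimensional of dimension $e < n$: this is the only case needed since orbit closures are equidimensional, and the general case follows by writing $V = \bigcup_i V_i$, choosing $f_i$ of degree $\le \deg V_i$ that vanishes on $V_i$ with $f_i(p) \neq 0$, and taking $f = \prod_i f_i$, whose degree is at most $\sum_i \deg V_i = d$. Passing to the projective closure $\overline V \subseteq \mathbb P^n$ (same degree $d$, with $\overline p \notin \overline V$), pick a generic linear subspace $\Lambda \cong \mathbb P^{\,n-e-2}$ disjoint from $\overline V \cup \{\overline p\}$ and let $\pi\colon \mathbb P^n \setminus \Lambda \to \mathbb P^{e+1}$ be projection from $\Lambda$. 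Then $\pi|_{\overline V}$ is finite, so $\overline W := \pi(\overline V)$ is a hypersurface in $\mathbb P^{e+1}$ with $\deg \overline W \le \deg \overline V = d$ by multiplicativity of degree in a finite surjection. Writing $\overline W = \mathbb V(\overline F)$ with $\overline F$ homogeneous of degree $\le d$ and dehomogenizing in the standard chart gives $F$ of degree $\le d$ with $\pi(V) \subseteq \mathbb V(F)$, so $f := F \circ \pi$ has degree $\le d$ and vanishes on $V$.

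It remains to arrange $f(p) \neq 0$, i.e. $\pi(\overline p) \notin \overline W$. The key observation is that $\pi(\overline p) \in \pi(\overline V)$ if and only if the linear span $\langle \Lambda, \overline p\rangle$, an $(n-e-1)$-plane, meets $\overline V$; and as $\Lambda$ ranges over generic $(n-e-2)$-planes, $\langle \Lambda, \overline p\rangle$ ranges over a generic $(n-e-1)$-plane through $\overline p$, which misses $\overline V$ because $\dim \overline V = e < e+1 = n - (n-e-1)$. So for generic $\Lambda$ the point $\overline p$ is separated from $\overline V$, and $f(p) \neq 0$, establishing the local claim.

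I expect the genericity in the last step to be the crux: one must check that the projection centers $\Lambda$ for which $\overline p$ cannot be separated from $\overline V$ form a proper closed subfamily, which is a dimension count on the linear spaces joining $\overline p$ to points of $\overline V$; this has to be coordinated with careful bookkeeping of how degree behaves under projective closure, finite projection, and dehomogenization. The rest — the Noetherian reduction, the Nullstellensatz step, and the product trick over components — is routine.
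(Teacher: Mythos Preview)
Your argument is correct and follows the same core strategy as the paper: reduce to the local separation claim, then project $V$ to a hypersurface of degree $\le d$ and pull back its defining equation. The execution differs in two respects. First, the paper stays in affine space and projects one coordinate at a time: assuming $p=0$, it chooses a line $L$ through $0$ whose direction lies outside $\overline{\pi(V)}\subseteq\mathbb P^{n-1}$, so that projection along $L$ is finite on the cone over $V$ and keeps the origin outside the (closed) image; iterating drops the codimension to $1$. You instead pass to the projective closure and project from a generic center $\Lambda$ in a single step, which is slicker but requires the genericity/dimension count you flagged at the end. Second, you handle the non-equidimensional case explicitly via the product $\prod_i f_i$ over irreducible components, whereas the paper only defines degree for equidimensional $V$ and leaves this point implicit. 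One small wrinkle in your write-up: the phrase ``dehomogenize $\overline F$ in the standard chart and set $f=F\circ\pi$'' is not quite right, since $\pi$ does not carry $\mathbb A^n$ into an affine chart of $\mathbb P^{e+1}$; the clean fix is to pull $\overline F$ back through the linear forms defining $\pi$ to obtain a homogeneous form of degree $\le d$ on $\mathbb P^n$, and dehomogenize \emph{that} with respect to $x_0$.
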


\begin{proof}
It suffices to find, for every point $p\not\in V$, a polynomial $f$ of degree $\leq d$ such that $f$ vanishes on $V$ but not at $p$.  If $V$ is a hypersurface, then $V = \mathbb V(f)$ with $\deg(f) = \deg(V)$, and we are done.  Otherwise, assume $V$ has codimension greater than 1.  Without loss of generality, further assume that $p$ is the origin.

To find a polynomial vanishing on $V$ but not at the origin, we project $V$ until an image has codimension 1.  Define $\pi \colon \mathbb A^n \to \mathbb P^{n-1}$ by $\pi\colon (x_1, \ldots, x_n) \mapsto [x_1 \colon \ldots \colon x_n]$.  Since $\dim \overline{\pi(V)} \leq \dim V<n-1$, there exists a point $[L] \in \mathbb P^{n-1} - \overline{\pi(V)}$.  Let $C(V) = \pi^{-1}(\overline{\pi(V)})$, the cone over $\overline{\pi(V)}$.  Then  $L = \overline{\pi^{-1}([L])}$ has $L\cap \overline{C(V)} = \{0\}$.

Assume without loss of generality that $L$ is the $x_n$-axis, and consider the projection $\phi\colon \mathbb A^n \to \mathbb A^{n-1}$ along $L$, defined by $\phi\colon (x_1,\ldots, x_n) \mapsto (x_1, \ldots, x_{n-1})$.  Because $\overline{C(V)}$ is a cone, the restriction of $\phi$ to $\overline{C(V)}$ is a finite map onto $\mathbb A^{n-1}$.  In particular, $\phi(V)$ is closed in $\mathbb A^{n-1}$.  Since $L$ is disjoint from $V$, $\phi(0) = 0$ remains outside the closed set $\phi(V)$.

Continue projecting until $\phi: \mathbb A^n \to \mathbb A^{n-m+1}$ gives $\phi(V)$ with codimension 1 (and dimension $\dim V$ after each projection).  Now, $\deg(\phi(V)) \leq d$.  Thus there exists a polynomial $f$ of degree $\leq d$ such that $f$ vanishes on $\phi(V)$ but $f(0)\neq 0$.  Hence $f \circ \phi(V) = 0$ but $f\circ \phi(0)\neq 0$.  As $\phi$ is defined by linear polynomials, the polynomial $f \circ \phi$ has degree $\leq d$, and the result follows.
\end{proof}

Now consider an algebraic group $G$ acting on affine $n$-space.  When we can bound the degree of an orbit closure $\overline{G\cdot x}$, then we can produce a degree bound for polynomials $f_i$ such that $\overline{G\cdot x} = \mathbb V(f_1, \ldots, f_r)$.  For an overview of bounds for the degrees of orbits and the (polynomial) degrees of generating invariants, see \cite{PolyBounds}.

\begin{prop}
Let $G$ be an algebraic group of dimension $m$, embedded in $\mathbb A^\ell$ with ideal $\mathbb I(G) = (h_1, \ldots, h_s)$.  Set $M = \max\{\deg(h_i)\}$.

Suppose $G$ acts on $\mathbb A^n$ with representation
\[ \rho\colon G \to GL_n \quad \text{defined by} \quad \rho\colon g \mapsto (\rho_{ij}(g)), \]

\noindent and set $N = \max\{\deg(\rho_{ij})\}$.  If $\overline{G\cdot x}$ is an orbit closure with dimension $r$, then
\[ \deg(\overline{G\cdot x}) \leq N^r M^{\ell-m}.\]
\end{prop}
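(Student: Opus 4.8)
The plan is to factor the bound as $\deg(\overline{G\cdot x}) \le N^r\cdot\deg(G) \le N^r\cdot M^{\ell-m}$: first bound $\deg(G)$ as a subvariety of $\mathbb{A}^\ell$, then control how degree changes under the orbit map. For the second inequality of the factorization, note that since $G$ is a smooth algebraic group, its irreducible components are the translates of $G^\circ$, so $G$ is equidimensional of dimension $m$, i.e.\ of codimension $\ell-m$ in $\mathbb{A}^\ell$. Applying the preceding lemma on generic linear combinations of defining equations to $h_1,\dots,h_s$ produces $\ell-m$ linear combinations $g_i = \sum_j a_{ij}h_j$, each of degree $\le M$, with $W := \mathbb{V}(g_1,\dots,g_{\ell-m}) \supseteq G$ of codimension $\ell-m$. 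Then $W$ is a proper intersection of $\ell-m$ hypersurfaces of degree $\le M$, so B\'ezout's theorem gives $\deg W \le M^{\ell-m}$; and since $G \subseteq W$ with both equidimensional of dimension $m$, every component of $G$ is a component of $W$, whence (intersecting with a generic $(\ell-m)$-dimensional linear subspace) $\deg G \le \deg W \le M^{\ell-m}$.

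For the other factor, consider the orbit map $\sigma_x\colon G \to \mathbb{A}^n$, $g\mapsto \rho(g)\cdot x$, whose $i$th coordinate $\sum_j \rho_{ij}(g)\,x_j$ is a polynomial on $\mathbb{A}^\ell$ of degree $\le N$; its image is $G\cdot x$, dense in $\overline{G\cdot x}$, which has dimension $r$. Pick generic affine hyperplanes $H_1,\dots,H_r \subset \mathbb{A}^n$. By the definition of degree, $\overline{G\cdot x}\cap H_1\cap\cdots\cap H_r$ is a set of exactly $\deg(\overline{G\cdot x})$ points, and by genericity these avoid the lower-dimensional boundary $\overline{G\cdot x}\setminus(G\cdot x)$, hence all lie in $\sigma_x(G)$. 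Each $\sigma_x^{-1}(H_i)$ is cut out on $G$ by the pullback of the affine-linear form defining $H_i$, a polynomial of degree $\le N$, so a B\'ezout-type inequality on $G$ (valid even for improper intersections) gives $\deg Z \le N^r\deg G$ for $Z := \sigma_x^{-1}(H_1\cap\cdots\cap H_r)$, where $\deg Z$ denotes the sum of the degrees of its components. Finally $\sigma_x$ maps $Z$ onto the $\deg(\overline{G\cdot x})$-point set $\overline{G\cdot x}\cap\bigcap_i H_i$, so $Z$ has at least $\deg(\overline{G\cdot x})$ irreducible components, and therefore $\deg(\overline{G\cdot x}) \le \deg Z \le N^r\deg G \le N^r M^{\ell-m}$.

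The first part is routine given the earlier lemma and B\'ezout for complete intersections. The technical heart is the second: one must argue that a generic codimension-$r$ linear section of $\overline{G\cdot x}$ really is $\deg(\overline{G\cdot x})$ honest points of the image $G\cdot x$ (missing the smaller-dimensional boundary), and then invoke a form of B\'ezout's inequality that still applies to the possibly reducible, possibly singular variety $G$ and in the case $r<m$, where the fibers of $\sigma_x$ over these points are positive-dimensional — while still reading off $\deg Z \ge \deg(\overline{G\cdot x})$ from the component count of $Z$. Pinning down the correct general B\'ezout inequality together with these genericity statements is where all the care lies.
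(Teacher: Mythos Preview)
Your proof is correct and follows the same approach as the paper: pull back a generic codimension-$r$ linear section through the orbit map, use the earlier lemma to replace $G$ by a complete intersection of $\ell-m$ hypersurfaces of degree $\le M$, and bound the component count of the preimage by B\'ezout. The only organizational difference is that the paper applies B\'ezout once in $\mathbb{P}^\ell$ to the homogenized complete intersection $\mathbb{V}(\overline{u}_1,\ldots,\overline{u}_r,\overline{f}_1,\ldots,\overline{f}_{\ell-m}) \supseteq \sigma^{-1}(H)$, rather than first bounding $\deg(G)$ and then invoking a relative B\'ezout inequality on $G$ --- this sidesteps exactly the ``general B\'ezout on a possibly singular $G$'' issue you flag at the end.
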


\begin{proof}
Let $d = \deg(\overline{G\cdot x})$.  For a generic $(n-r)$-dimensional linear subspace $H\subseteq \mathbb A^n$, by definition $d = \#(\overline{G\cdot x} \cap H)$.  Let $\sigma\colon g \mapsto g\cdot x$ be the orbit map.  Then the degrees of the polynomials defining $\sigma$ are bounded by $N$.  Hence $\sigma^{-1}(H) = \mathbb V(u_1, \ldots, u_r) \subseteq G$ has $\deg(u_i) \leq N$ and has $\geq d$ irreducible components.

By the first lemma above, there exist generic linear combinations $f_j$ of the generators of $\mathbb I(G)$ such that $\mathbb V(f_1, \ldots, f_{\ell-m})$ is a complete intersection and contains $G$.  Thus
\[ \sigma^{-1}(H) \subseteq \mathbb V\left(u_1, \ldots, u_r, f_1, \ldots, f_{\ell-m}\right)\subset \mathbb A^\ell. \]

\noindent Consider the vanishing of the homogenized polynomials

\[ \mathbb V\left(\overline{u}_1, \ldots, \overline{u}_r, \overline{f}_1, \ldots, \overline{f}_{\ell-m}\right) \subset \mathbb P^\ell.\]

\noindent By a generalization of B\'ezout's theorem (see \citet{Fulton}, section 12.3.1), the number of irreducible components of this variety is (generously) bounded by
\[ \prod_i \deg(\mathbb V(\overline{u}_i)) \cdot \prod_j \deg(\mathbb V(\overline{f}_j)) =  \prod_i \deg(\overline{u}_i) \cdot \prod_j \deg(\overline{f}_j) \leq N^r M^{\ell-m}.\]

\noindent This number then also bounds $d$.
\end{proof}

\begin{cor}
With the hypotheses of the previous proposition, there exist polynomials $f_1, \ldots, f_t$ such that $\overline{G\cdot x} = \mathbb V(f_1, \ldots, f_t)$ and
\[ \deg(f_i) \leq \deg(\overline{G\cdot x}) \leq N^r M^{\ell-m}.\]
\end{cor}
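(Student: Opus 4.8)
The plan is to combine the two preceding results essentially verbatim; this corollary is a packaging statement. First I would check that $V := \overline{G\cdot x}$ is legitimately an input to the Proposition on ideals of a variety of given degree. The orbit $G\cdot x$ is the image of the orbit map $\sigma\colon G\to\mathbb A^n$, hence a homogeneous space for $G$; all of its components are translates of one another, so they share the common dimension $r$. Thus $V=\overline{G\cdot x}$ is equidimensional of codimension $n-r$, and the quantity $\deg(V)$ from the definition above is well-defined. Set $d := \deg(\overline{G\cdot x})$.

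Next I would apply the Proposition on defining ideals to $V$ with this value of $d$: it produces an ideal $\fq$, generated by finitely many polynomials $f_1,\ldots,f_t$ each of degree $\le d$, with $\sqrt{\fq}=\mathbb I(V)$. Since $k$ is algebraically closed and $V$ is Zariski closed, the Nullstellensatz gives $\mathbb V(\mathbb I(V))=V$, and therefore $\mathbb V(\fq)=\mathbb V(\sqrt{\fq})=\mathbb V(\mathbb I(V))=V$. This already yields $\overline{G\cdot x}=\mathbb V(f_1,\ldots,f_t)$ with $\deg(f_i)\le d$ for every $i$.

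Finally I would invoke the immediately preceding Proposition, which bounds $d=\deg(\overline{G\cdot x})\le N^r M^{\ell-m}$, and chain the two estimates to conclude $\deg(f_i)\le \deg(\overline{G\cdot x})\le N^r M^{\ell-m}$, as claimed.

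There is no substantive obstacle here, since the real content lives in the two cited propositions; the only points deserving a sentence of care are (i) confirming that $\overline{G\cdot x}$ is equidimensional so that $\deg(\overline{G\cdot x})$ is defined, and (ii) the passage from $\sqrt{\fq}=\mathbb I(V)$ to $\mathbb V(\fq)=V$ via the Nullstellensatz over the algebraically closed ground field. Both are routine in the present setting, so the proof will be short.
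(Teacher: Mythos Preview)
Your proposal is correct and matches the paper's intent: the corollary is stated without proof as an immediate combination of the two preceding propositions, and your write-up simply spells out that combination. The only additions you make---verifying equidimensionality of $\overline{G\cdot x}$ so that $\deg$ is defined, and invoking the Nullstellensatz to pass from $\sqrt{\fq}=\mathbb I(V)$ to $\mathbb V(\fq)=V$---are routine checks the paper leaves implicit.
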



\section{Separating Orbits}

Let $\rho: G \hookrightarrow GL_n$ act on $\mathbb A^n$ as in Section 3.  For $p \in \mathbb A^n$, there exists an ideal $\fq$ such that $\mathbb V(\fq) = \overline{G\cdot p}$ and $\fq$ is generated in degree $\leq N^r M^{\ell-m}$.  We will establish straight line programs for the orbit-separating set $\mathcal C$ by considering a generating set for $\fq$.  We prove that these programs define invariant functions separating the orbits of $G$.  The length of these programs will be polynomial in the dimension $n$ and the degree $N$ of the representation.

\subsection{The Orbit Separating Algorithm}

Input the embedding of $G \hookrightarrow \mathbb A^\ell$ and the orbit map $\sigma_p \colon g \mapsto g\cdot p$ as above, which varies with $p$.  Let $k[x_1, \ldots, x_n]$ be the coordinate ring of $\mathbb A^n$.  Then $\ker \sigma_p^* = \mathbb I(G\cdot p)$, but to define $\overline{G\cdot p}$ it suffices to compute a $k$-basis for $\ker \sigma_p^*$ up to degree $N^r M^{\ell-m}$.  The elements of this $k$-basis generate $\fq$ as an ideal.

For each $i =1, \ldots, N^r M^{\ell-m}$, the following algorithm computes a canonical $k$-basis for $\ker \sigma_p^*$ in degree $\leq i$, but for each polynomial in the basis the algorithm only outputs constructible functions (of $p$) that give the monomial coefficients appearing in that basis, whatever the monomials may be. Hence the algorithm forgets the generating set of the ideal $\fq$.  This forgetting allows the algorithm to have polynomial length as a straight line program, since the number of possibly monomials grows exponentially with $n$.

In the most precise sense, given a point $p \in \mathbb A^n$, the following algorithm concatenates straight line programs to output a $G$-invariant vector $\mathcal C$ over $k$.  In fact, each entry of $\mathcal C$ is a straight line program in terms of the coordinates of $p$.  Thus the algorithm prescribes a vector $\mathcal C$ of $G$-invariant constructible functions that separate orbits: points in distinct orbits produce distinct vectors.  The proofs for the $G$-invariance and orbit separation will follow.  

Choose a monomial order for the monomials spanning $k[z_1,\ldots, z_\ell]$.  As a preliminary calculation, compute a Gr\"obner basis and a $k$-basis for $\mathbb I(G)$ up to degree $N^{r+1}M^{\ell-m}$.  Let $B(d)$ denote the set of elements of the $k$-basis up to degree $d$.  Also, for a vector $w$, let $\pi_t(w)$ denote the vector of the the first $t$ entries of $w$.

Lastly, since all computations occur in $k[G]$, we must predict the dimension of $k[G]_{\leq d}$.

\begin{lem}  Let $m = \dim G$.  There exists a function $H(d)$, computable from a Gr\"obner basis for $\mathbb I(G)$, such that $H(d) = \dim_k k[G]_{\leq d}$ for all $d \geq 0$, and $H(d) \leq O(d^m)$.
\end{lem}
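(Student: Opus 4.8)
The plan is to reduce $\dim_k k[G]_{\le d}$ to a count of standard monomials and then to control that count with a Hilbert polynomial. Assuming, as we may, that the monomial order fixed above is degree-compatible (graded), let $\mathcal{G} = \{g_1, \ldots, g_t\}$ be the Gr\"obner basis for $\mathbb{I}(G) \subseteq k[z_1, \ldots, z_\ell]$ already computed by the algorithm, and let $\mathrm{in}(\mathbb{I}(G)) = (\mathrm{LT}(g_1), \ldots, \mathrm{LT}(g_t))$ be the initial (monomial) ideal. Call a monomial \emph{standard} if it is divisible by no $\mathrm{LT}(g_i)$. By the standard-monomial basis theorem, the standard monomials descend to a $k$-basis of $k[G]$; since the order is graded, the division algorithm never raises the degree of a remainder, so the standard monomials of degree $\le d$ form a $k$-basis of the filtered piece $k[G]_{\le d}$. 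Hence
\[ H(d) := \#\{\, z^\alpha \text{ standard} : |\alpha| \le d \,\} = \dim_k k[G]_{\le d}. \]
As $H(d)$ depends only on the staircase of $\mathrm{in}(\mathbb{I}(G))$, it is computable from $\mathcal{G}$ by a finite combinatorial procedure (for instance, inclusion--exclusion over the generators of the monomial ideal), which gives the asserted computability.

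For the growth estimate I would pass to the associated graded algebra $A := k[z_1, \ldots, z_\ell]/\mathrm{in}(\mathbb{I}(G))$, a standard-graded $k$-algebra whose degree-$e$ Hilbert function $h(e) = \dim_k A_e$ counts the standard monomials of degree exactly $e$; thus $H(d) = \sum_{e=0}^d h(e)$. The Gr\"obner degeneration of $k[G]$ to $A$ is flat, so $\dim A = \dim k[G] = \dim G = m$. Consequently $h(e)$ agrees with a polynomial in $e$ of degree $m - 1$ for $e \gg 0$, and the partial sums $H(d)$ agree with a polynomial in $d$ of degree $m$; enlarging the constant to absorb the finitely many small values of $d$ then yields $H(d) = O(d^m)$.

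The only step that is not bookkeeping is the identity $\dim A = \dim G$ --- that is, the invariance of Krull dimension under passage to the initial ideal with respect to a graded order. This is classical (it is the flatness of the Gr\"obner degeneration, equivalently the equality of the Hilbert function of $k[G]$ under the degree filtration with that of $A$); once it is granted, the degree-$m$ bound on the Hilbert polynomial of $A$, and hence on $H$, is immediate from the definition of dimension via the Hilbert polynomial.
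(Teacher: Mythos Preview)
Your argument is correct. You reduce $\dim_k k[G]_{\le d}$ to the count of standard monomials of degree at most $d$ with respect to a graded term order, then identify this count with the cumulative Hilbert function of the monomial quotient $A = k[z_1,\ldots,z_\ell]/\mathrm{in}(\mathbb I(G))$, and finally invoke $\dim A = \dim k[G] = m$ to conclude $H(d)=O(d^m)$. The one extra hypothesis you impose, that the fixed term order be degree-compatible, is harmless here since the algorithm is free to choose the order.

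The paper takes a different route: rather than passing to the initial ideal, it homogenizes. It forms the graded subalgebra $S = k[f_1t,\ldots,f_rt,t]\subseteq k[G][t]$ (where the $f_i$ are degree-one generators of $k[G]$), observes $S_d = k[G]_{\le d}\cdot t^d$, and then reads $H(d)$ off as the $d$th coefficient of the Hilbert series of $S$; since $\dim S \le m+1$, the Hilbert polynomial of $S$ has degree at most $m$. Your approach is arguably more transparent about \emph{how} $H(d)$ is computed from the given Gr\"obner basis (it is literally a staircase count), and it avoids introducing an auxiliary algebra; the paper's approach, on the other hand, does not need the term order to be graded and packages the growth estimate into a single dimension bound $\dim S \le m+1$. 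Both arguments ultimately rest on the same fact, that the degree-filtered Hilbert function of $k[G]$ is eventually polynomial of degree $m$.
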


\begin{proof}  Suppose $R=k[G]$ is generated as a $k$-algebra by $f_1, \ldots, f_r$ of degree 1.  Define $S = k[f_1t, \ldots, f_rt, t] \subseteq R[t]$, and claim $S_d = R_{\leq d}\cdot t^d$, where $S$ is graded by $t$-degree.  The inclusion $\supseteq$ is clear, and if $h \in S_d$ is a homogeneous polynomial in $t$, then the coefficients of $t^d$ can have $R$-degree no greater than $d$ (less, for example, in the term $f_1t\cdot t^{d-1}$).  Let $H(d)$ be the $d$th coefficient of the Hilbert series of $S$, which we may compute from a Gr\"obner basis for $\mathbb I(G)$.  Then $H(d) = \dim_k R_{\leq d}$.  Since $S$ has dimension bounded by $m+1$, the Hilbert polynomial for $S$ has degree bounded by $m$.  Thus $H(d) \leq O(d^m)$.
\end{proof}

\begin{alg}
\mbox{}
\begin{enumerate}
\item For $j = 1, \ldots, n$, let $v_j$ be the vector of coefficients of $\sigma^*_p(x_j)$ with respect to the (ordered) monomial basis of $k[z_1\ldots, z_\ell]$.
\item $V_1 := (v_1,\ldots, v_n)$.
\item $i:=1$, $\mathcal C_0 = \emptyset$.
\item Put the vectors of $V_i = (v_1, \ldots, v_{k_i})$, in order, in the first $k_i$ columns of a matrix $X_i$; fill subsequent columns with $B(iN)$.
\item Compute $\text{Out}(\Gamma^{tR}, X_i)$, the tRREF $X_i$.
\item Compute $\beta := \text{Out}(\Gamma^K, \text{Out}(\Gamma^{tR}, X_i))$, a basis for $\ker X_i$.
\item Let $\mathcal C_i := \mathcal C_{i-1} \cup \{\pi_{k_i}(v) \mid v \in \beta\}$.
\item IF $N^r M^{\ell-m} = i$, THEN output $\mathcal C = \mathcal C_i$, and STOP.
\item Let $Y$ be the matrix whose rows are the vectors in $V_i$.  Let $D$ be the first $k_i$ entries on the diagonal of the tRREF $X_i$.
\item Compute $Y':=\text{Out}(\Sigma, \{Y,D\})$, the rows of $Y$ indicated by $D$.
\item Let $L_i$ be the first $H(i)$ rows of $Y'$.
\item IF $k_i = \#(\text{rows of Y}) < H(i)$, THEN pad $L_i$ with zeros so that $L_i$ has precisely $H(i)$ vectors.
\item $V_{i+1} := L_i \cup \left( \{\sigma^*_p (x_1), \ldots, \sigma^*_p (x_n)\} \cdot \{v_j \in L_i \mid j > H(i-1). \} \right).$
\item $i :=i+1$.
\item GOTO (4).
\end{enumerate}
\end{alg}

The final steps of each iteration require some remarks.  For step (10), recall that the nonzero entries of the diagonal $D$ of the tRREF of $X_i$ indicate which columns of $X_i$ are linearly independent.  These are the image vectors the algorithm should preserve for the next iteration, so that it can proceed with a polynomial number of multiplications.  In step (13), we multiply the $\sigma_p^*(x_i)$ only by these newfound vectors.

Step (12) can be accomplished in the context of straight line programs because we can predict the iteration $i$ at which $k_i  \geq H(i)$ first occurs, independent of the choice of $p$.  At step (13) we multiply $L_i$ by all $\sigma^*_p(x_i)$ because, in principle, all $\sigma^*_p(x_i)$ could be linearly independent modulo $\mathbb I(G)$.  As $i$ increases, the vectors in each $V_i$ describe the images of larger monomials $x^I$, $I$ a multi-index, in $k[x_1,\ldots, x_n]$. The algorithm terminates when we have considered a $k$-basis for the polynomials of degree up to $N^r M^{\ell-m}$ that vanish on $\overline{G\cdot p}$.  By the previous section, the elements of that $k$-basis generate an ideal whose radical is $\mathbb I(\overline{G\cdot p})$.

\begin{prop}
The constructible functions defined by the set $\mathcal C$
\begin{enumerate}
\item are constant on the orbit of $p \in \mathbb A^n$, and hence invariant under the usual action $g\cdot f(x) = f(g^{-1}\cdot x)$ for $g \in G$,  
\item separate orbits.
\end{enumerate}
\end{prop}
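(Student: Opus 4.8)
The plan is to reduce both claims to a single structural fact: the vector $\mathcal{C}(p)$ is a faithful, canonical record of the orbit closure $\overline{G\cdot p}$ — more precisely, of the ideal $\ker\sigma_{p}^{*}=\mathbb{I}(\overline{G\cdot p})$ truncated in degrees $\le d:=N^{r}M^{\ell-m}$ — after which (1) is essentially immediate and (2) follows from the degree bound of Section~3.

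First I would establish that structural fact by induction on the iteration index $i$. The claim is that (a) the polynomials $P_{1},\dots,P_{k_{i}}$ whose images $\sigma_{p}^{*}(P_{j})$ occupy the non-$B(iN)$ columns of $X_{i}$ span a subspace $T_{i}\subseteq k[x_{1},\dots,x_{n}]_{\le i}$ with $\sigma_{p}^{*}(T_{i})=\sigma_{p}^{*}\bigl(k[x_{1},\dots,x_{n}]_{\le i}\bigr)$, and (b) the list $P_{1},\dots,P_{k_{i}}$, the pivot diagonal $D$, and the vector $\mathcal{C}_{i}$ all depend only on $\ker\sigma_{p}^{*}$ (and on the fixed preliminary data $B(\cdot)$, $H(\cdot)$), not otherwise on $p$. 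The base case is clear since $V_{1}=(\sigma_{p}^{*}(x_{1}),\dots,\sigma_{p}^{*}(x_{n}))$. For the inductive step, since $\deg\sigma_{p}^{*}(x_{j})\le N$ uniformly in $p$, every $\sigma_{p}^{*}(P_{j})$ lies in degree $\le iN$, so adjoining the fixed $k$-basis $B(iN)$ of $\mathbb{I}(G)_{\le iN}$ makes $\ker X_{i}$ detect exactly the relations $\sum_{j}c_{j}\sigma_{p}^{*}(P_{j})\in\mathbb{I}(G)$, i.e.\ the $(c_{j})$ with $\sum_{j}c_{j}P_{j}\in\ker\sigma_{p}^{*}$; hence $\{\pi_{k_{i}}(v):v\in\beta\}$ — and, by the Proposition on $\Gamma^{tR}$ and the Lemma on $\Gamma^{K}$, its canonical reduced-row-echelon form — is determined by $\ker\sigma_{p}^{*}$ together with the (already fixed) $P_{j}$'s. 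The pivot diagonal $D$ likewise records which $\sigma_{p}^{*}(P_{j})$ fail to lie in the span of the preceding ones, which is the same as the existence of a relation $P_{j}-\sum_{l<j}c_{l}P_{l}\in\ker\sigma_{p}^{*}$; so $D$, the basis $L_{i}$, and hence the next list $V_{i+1}$ depend only on $\ker\sigma_{p}^{*}$. The Lemma on $H(\cdot)$ guarantees that discarding the non-pivot images loses no span (there are at most $\dim_{k}k[G]_{\le iN}$ independent ones) and that the step at which $k_{i}$ overtakes the padding size can be predicted independently of $p$, which is what allows all of this to be realized as one straight line program; span preservation then propagates to every level. Finally, an induction on degree shows that the relations $\bigcup_{i\le d}\bigl(T_{i}\cap\ker\sigma_{p}^{*}\bigr)$ collected through iteration $d$ generate an ideal $\fq$ with $\mathbb{V}(\fq)=\overline{G\cdot p}$: reducing a relation of degree $e\le d$ modulo the lower-degree relations already in the ideal leaves a relation lying in $T_{e}$, hence in $T_{e}\cap\ker\sigma_{p}^{*}$ — and here the Corollary at the end of Section~3, bounding the degrees of a defining set for $\overline{G\cdot p}$ by $d=N^{r}M^{\ell-m}$, is exactly what closes the argument. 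Since $\mathcal{C}$ records these relations as coefficient vectors in fixed, zero-padded positions whose layout is, by the induction, itself forced by $\ker\sigma_{p}^{*}$, we obtain the desired equivalence: $\mathcal{C}(p)=\mathcal{C}(q)$ if and only if $\overline{G\cdot p}=\overline{G\cdot q}$.

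Both conclusions of the Proposition now follow quickly. For (1): given $g\in G$ we have $G\cdot p=G\cdot(g\cdot p)$, hence $\overline{G\cdot p}=\overline{G\cdot(g\cdot p)}$ and, by the structural fact, $\mathcal{C}(p)=\mathcal{C}(g\cdot p)$; thus each component of $\mathcal{C}$ is a constructible function that is constant on every orbit, which is exactly invariance under $g\cdot f(x)=f(g^{-1}\cdot x)$. For (2): suppose $p,q$ lie in distinct orbits. As recalled in the introduction, equal orbit closures force equal orbits (orbits are open, hence dense, in their closures), so $\overline{G\cdot p}\ne\overline{G\cdot q}$; after relabeling, choose $z\in\overline{G\cdot p}\setminus\overline{G\cdot q}$. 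By the Corollary of Section~3, $\overline{G\cdot q}=\mathbb{V}(f_{1},\dots,f_{t})$ with $\deg f_{i}\le d=N^{r}M^{\ell-m}$; since $z\notin\overline{G\cdot q}$, some $f_{i}$ has $f_{i}(z)\ne 0$, so $f_{i}\in\ker\sigma_{q}^{*}$ while $f_{i}\notin\ker\sigma_{p}^{*}$. Hence the inequality $\overline{G\cdot p}\ne\overline{G\cdot q}$ is already visible in degree $\le d$, and by the structural fact $\mathcal{C}(p)\ne\mathcal{C}(q)$: some function in $\mathcal{C}$ separates $p$ from $q$.

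The main obstacle is the inductive verification of the structural fact. The delicate points are: that the window $\mathbb{I}(G)_{\le iN}$ used for the padding columns is exactly wide enough to see every degree-$\le i$ relation among the tracked images — this forces the choice $iN$ and uses $\deg\rho_{ij}\le N$; that the pivot-discarding and zero-padding of steps (9)--(13) preserve spans at every level, so that the relations ultimately collected generate the full defining ideal in degrees $\le N^{r}M^{\ell-m}$, which is where the degree bound of Section~3 is indispensable and without which $\mathcal{C}$, though still invariant, might fail to separate; and that the pivot choices, although computed from the matrix $X_{i}$, in fact depend only on $\ker\sigma_{p}^{*}$, so that two points with the same orbit closure produce verbatim-identical executions and hence identical output vectors.
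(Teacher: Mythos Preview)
Your proof is correct and follows essentially the same inductive strategy as the paper: both argue, by induction on the iteration index $i$, that the algorithm's state at level $i$ (the tracked monomials $P_j$, the pivot diagonal $D$, and the output $\mathcal C_i$) depends only on $\ker\sigma_p^{*}=\mathbb I(\overline{G\cdot p})$, and both use the degree bound of Section~3 to conclude that the collected relations cut out $\overline{G\cdot p}$. You package this as a single ``structural fact'' yielding the equivalence $\mathcal C(p)=\mathcal C(q)\Leftrightarrow\overline{G\cdot p}=\overline{G\cdot q}$ and then read off (1) and (2), whereas the paper runs the induction twice --- once assuming $q\in G\cdot p$ for invariance, once assuming $\mathcal C(p)=\mathcal C(q)$ for separation --- but the content is the same.
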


\begin{proof}
To show that the functions defined by the straight line programs in $\mathcal C$ are invariant, choose $p \in \mathbb A^n$ and $q \in G\cdot p$.  Let $X_i(p)$ be the matrix produced in step (4) of the algorithm in the $i$th iteration. Let $X_i^V(p)$ be the first $\vert V_i\vert = k_i$ columns of $X_i(p)$, that is, those containing the vectors in $V_i(p)$.  Now, $X_1^V(p)$ and $X_1^V(q)$ have the same kernel, because (a) as maps $k[x_1, \ldots, x_n]_1 \to k[G]_{\leq N}$ they have the same basis $x_1,\ldots, x_n$ for their domain, and because (b) the kernel of each matrix must span $\mathbb I(G\cdot p)_1$.  Thus $X_1^V(g\cdot p)$ = $A \cdot X_1^V(p)$ for some matrix $A$.  In particular, $X_1^V(p)$ and $X_1^V(q)$ have linearly independent columns in the same places, and hence have the same RREF.

So letting $\mathcal C_i(x)$ denote the kernel vectors obtained on input $x$ in the $i$th iteration, we have $\mathcal C_1(p) = \mathcal C_1(g\cdot p)$.  As well, let $L_i(p)$ denote the set (produced in step (11) of the algorithm) containing the linearly independent columns of $X_i^V(p)$. Then we have $L_1(p) = \{ \sigma_p^*(x_{j_1}),\ldots, \sigma_p^*(x_{j_r})\}$ and  $L_1(g\cdot p) = \{ \sigma_{g\cdot p}^*(x_{j_1}),\ldots, \sigma_{g\cdot p}^*(x_{j_r})\}$ for the same indices $j_1, \ldots, j_s$.

Proceed by induction on $i$: we may assume $X_i^V(p)$ and $X_i^V(q)$ have the same RREF and hence $\mathcal C_i(p) = \mathcal C_i(q)$. We may also assume the columns of $X_i^V(p)$ and $X_i^V(q)$ represent the images of the same set of monomials $\{x^{I_1}, \ldots, x^{I_s}\}$, for multi-indicies $I_j$.  Then the lists $V_{i+1}(p)$ and $V_{i+1}(q)$ also represent the images of the same monomials under $\sigma_p^*$ and $\sigma_q^*$, respectively.  Claim again that $X_{i+1}^V(p)$ and $X_{i+1}^V(q)$ have the same RREF.  By the induction hypothesis, the two matrices have the same basis for their domain, and the kernel of each must span $I(G\cdot p)_{i+1}$.  These facts prove the claim, as in the base case.  Thus $\mathcal C_{i+1}(p) = \mathcal C_{i+1}(q)$, and the functions in $\mathcal C$ are invariant.

To show the functions in $\mathcal C$ separate orbits, choose $p,q \in \mathbb A^n$ such that the functions in $\mathcal C$ take the same values at both points.  In particular, $\mathcal C_1(p) = \mathcal C_1(q)$, so $X_1(p)$ and $X_1(q)$ have the same canonical kernel.  As above, it follows that $X_1(p)$ and $X_1(q)$ have the same RREF.  Two facts emerge.  Crucially, the kernels of $\sigma^*_p$ and $\sigma^*_q$ have the same canonical $k$-basis for their subspaces of degree-1 elements, because the matrices $X_1^V(p)$ and $X_1^V(q)$ assume the same basis for the domain space $k[x_1, \ldots, x_n]_1$, namely, $x_1, \ldots, x_n$.  We wish to prove this for all degrees $i$.

What is more, $L_1(p) = \{ \sigma_p^*(x_{j_1}),\ldots, \sigma_p^*(x_{j_s})\}$ and  $L_1(q) = \{ \sigma_{q}^*(x_{j_1}),\ldots, \sigma_{q}^*(x_{j_s})\}$ for the same indices $j_1, \ldots, j_s$, because $X_1^V(p)$ and $X_1^V(q)$ have linearly independent columns in the same positions.  Thus $V_2(p)$ and $V_2(q)$ list the images of the same set of monomials $x_j x_k$ under $\sigma_p^*$ and $\sigma_q^*$, respectively.

Proceeding by induction, if $X_i^V(p)$ and $X_i^V(q)$ have the same RREF and list the images of the same monomials, then $X_{i+1}^V(p)$ and $X_{i+1}^V(q)$ also list the images of the same monomials.  By the assumption $C_{i+1}(p) = C_{i+1}(q)$, the matrices  $X_{i+1}^V(p)$ and $X_{i+1}^V(q)$ also have the same RREF.  Therefore the kernels of $\sigma^*_p$ and $\sigma^*_q$ have the same canonical $k$-basis for their degree-$i$ subspaces, completing the induction.  In particular, the same ideal $(f_1, \ldots, f_s)$ defines $\overline{G\cdot p}$ and $\overline{G \cdot q}$.  Since $G$ is an algebraic group, it follows $G\cdot p = G\cdot q$, completing the proof.
\end{proof}


\subsection{Complexity Bounds}

The bookkeeping that follows confirms that the complexity of the orbit separating algorithm is polynomial in $n$ and $N$.  First, the degree bound $N^r M^{\ell-m}$ for a generating set of $\fq$ requires that we compute products of $N^r M^{\ell-m}$ degree-$N$ polynomials $f_i$ in $k[z_1,\ldots, z_\ell]$, for $i = 1, \ldots, N^r M^{\ell-m}$.  To this end, compute the monomials in the $z_j$ up to degree $N\cdot N^{r}M^{\ell-m}$, with total complexity $O(N^{\ell(r+1)}M^{\ell(\ell-m)})$.  Then multiply $f_1f_2 \cdots f_i$ and $f_{i+1}$ to obtain an implicit straight-line program for the product of $i+1$ distinct degree-$N$ polynomials in $k[z_1, \ldots, z_\ell]$, with complexity $O(2^{2\ell-2} i^{2\ell} N^{2\ell})$.  For details of polynomial multiplication, see Chapter 2 of \cite{Complexity}.

Next consider the sizes of matrices in the algorithm.  Recall that for large $d$, $H(d) \leq O(d^m)$.  Hence in iteration $i$, the matrix $X_i$ has
\[ k_i = O\left(((i-1)N)^m +n\cdot [((i-1)N)^m - ((i-2)N)^m]\right)\]

\noindent columns from $V_i$, has $\vert B(iN)\vert$ additional columns, and has $(iN)^\ell$ rows corresponding to the monomials in $k[z_1,\ldots, z_\ell]_{\leq iN}$.  Of course, $\vert B(iN)\vert = O((iN)^\ell)$, so the number of rows of $X_i$ is $O((iN)^\ell)$, and the number of columns is $O(n(iN)^m + (iN)^\ell) \leq O(n(iN)^\ell)$.  Now, computing the tRREF of an $s\times t$ matrix has complexity $O(st^2 + t^3)$.  Thus the computation of tRREF$(X_i)$ has complexity bounded by
\[ O\left( (iN)^\ell\cdot n^2(iN)^{2\ell} + n^3(iN)^{3\ell} \right) = O\left(n^3 i^{3\ell} N^{3\ell}\right).\]

\noindent The above count of the columns of $X_i$ also yields that the computation of the kernel of tRREF$(X_i)$ has complexity $O(n^2 i^{2\ell} N^{2\ell})$

In collecting the independent elements of $V_i$ in step (10), the input to the procedure $\Sigma$ is a $k_i \times (iN)^\ell$ matrix, where
\[ k_i  =  O\left(((i-1)N)^m +n\cdot \left[((i-1)N)^m - ((i-2)N)^m\right]\right) \leq O(n(iN)^m).\]

\noindent On an $s\times t$ matrix, $\Sigma$ has complexity $O(s^2 t)$, whence step (10) has complexity $\leq O(n^2 (iN)^{2m} \cdot (iN)^\ell)$.

Finally, the polynomial multiplications $f_1 \cdots f_i$ proceed through $i = N^rM^{\ell-m}$, with $n$ multiplications for each $i$.  Their total complexity is
\[ O\left(2^{2\ell-2} n (N^r M^{\ell-m})^{2\ell+1}N^{2\ell}\right) = O\left(2^{\ell-1}n N^{2\ell(r +1) + r}M^{(\ell-m)(2\ell+1)}\right).\]

Of the other computations, the programs for the tRREF have the highest cost.  Summing their complexity from $i=1$ to the degree bound, $N^r M^{\ell-m}$, yields the following:
\[ O\left( n^3 (N^r M^{\ell-m})^{3\ell+1}N^{3\ell}\right) = O\left( n^3 N^{3\ell(r+1) + r} M^{(\ell-m)(3\ell+1)}\right),\]

\noindent where, again, $N$ is the maximum polynomial degree of the representation, $M$ is a degree bound for a generating set of $\mathbb I(G) \subset k[z_1, \ldots, z_\ell]$, and under this embedding $G$ has dimension $m$.  Since the embedding $G \hookrightarrow \mathbb A^\ell$ is fixed, we omit the constant power of $M$ from the asymptotic complexity.

Finally, to bound the number of relations that the algorithm computes, we sum the column count $O(n(iN)^\ell)$ of the matrices $X_i$ over all iterations $i$, and obtain
\[ O\left(n N^{\ell(r+1)+r} M^{(\ell-m)(\ell+1)}\right) \]

\noindent polynomials generating the ideal $\fq$.  In iteration $i$, such a polynomial has $k_i \leq O(n(iN)^m)$ terms, giving a bound for the number of constructible functions that the algorithm computes:
\[ O \left(n^2 N^{(\ell+m+1)(r+1)}M^{(\ell-m)(\ell+m+1)}\right).\]

\noindent Omitting the powers of $M$, the main theorem follows.


\section{Conclusion}

Given any representation of a fixed algebraic group, the algorithm writes down invariant, constructible functions that separate the orbits of the group action.  What is more, there are polynomial bounds, in the parameters of the representation, for the number of the functions and their total length as straight line programs.  These bounds describe the complexity of the problem of determining if two points lie in the same orbit, by essentially counting the number of necessary algebraic operations to perform on the coordinates of the points.  Additionally, it emerges that to separate orbits, the ``quasi-inverse" is a sufficient generalization of the ring of polynomial functions.

\section{Acknowledgments}

I would like to thank my adviser, Harm Derksen, for his indispensable ideas and guidance.



\begin{thebibliography}{7}
\bibitem[B\"urgisser et al.(1997)]{Complexity} P. B\"urgisser, M. Clausen\ and\ M. A. Shokrollahi, Algebraic Complexity Theory, Springer, Berlin, 1997.
\bibitem[Derksen(2001)]{PolyBounds} Derksen, Harm. Polynomial bounds for rings of invariants. Proc. Amer. Math. Soc. 129 (2001), no. 4, 955--963 (electronic).
\bibitem[Derksen \& Kemper(2002)]{DK} H. Derksen\ and\ G. Kemper, Computational Invariant Theory, Encyclopaedia Math. Sci., 130, Springer, Berlin, 2002.
\bibitem[Domokos(2007)]{Domokos} M. Domokos, Typical separating invariants, Transform. Groups {\bf 12} (2007), no.~1, 49--63.
\bibitem[Fulton(1984)]{Fulton} Fulton, William. Intersection Theory, Springer, Berlin, 1984.
\bibitem[Kemper(2003)]{KemperComp} Kemper, Gregor. Computing invariants of reductive groups in positive characteristic, Transform. Groups {\bf 8} (2003), no.~2, 159--176.
\bibitem[Kemper(2009)]{KemperSep} Kemper, Gregor. Separating invariants, J. Symbolic Comput. {\bf 44} (2009), no.~9, 1212--1222.
\end{thebibliography}
\end{document}